\documentclass[11pt]{amsart}
\usepackage{graphicx,amssymb}

\usepackage{amsmath,amsthm,amsfonts,amssymb,latexsym,mathrsfs,color,extarrows}

\usepackage{longtable}

\usepackage{tikz}

\usepackage[square,sort&compress,comma,numbers]{natbib}
\usepackage{nicefrac,xcolor,upref,version}

\usepackage[colorlinks=true,citecolor=cyan,backref=page]{hyperref}


\usepackage{amscd,amsthm,amsmath,amssymb,amsfonts}

\usepackage{mathrsfs}



\newtheorem{theorem}{Theorem}[section]
\newtheorem{lemma}[theorem]{Lemma}

\numberwithin{equation}{section}


\textwidth=13.5cm
\textheight=21cm
\parindent=16pt

\def\Q{{\mathbb {Q}}}
\def\Z{{\mathbb Z}}  
\def\R{{\mathbb R}} 
\def\C{{\mathbb C}} 
\def\eps{{\varepsilon}}

\def\beq{\begin{equation}}
\def\eeq{\end{equation}}

\def\Norm{{\rm Norm}}

\def\bfx{{\bf x}}

\begin{document}

\title[Effective approximation to complex algebraic numbers]{Effective approximation to complex algebraic numbers
by quadratic numbers}

\author{Prajeet Bajpai}
\address{Department of Mathematics, University of British Columbia, Vancouver, B.C., V6T 1Z2 Canada}
\email{pbajpai@cs.ubc.ca}

\author{Yann Bugeaud}
\address{I.R.M.A., UMR 7501, Universit\'e de Strasbourg
et CNRS, 7 rue Ren\'e Descartes, 67084 Strasbourg Cedex, France}
\address{Institut universitaire de France}
\email{bugeaud@math.unistra.fr}

\begin{abstract}
We establish an effective improvement on the Liouville inequality 
for approximation to complex non-real algebraic numbers by quadratic complex 
algebraic numbers. 
\end{abstract}

\subjclass[2010]{11J68; 11J86}
\keywords{Approximation to algebraic numbers, linear forms in logarithms}

\maketitle

\section{Introduction} \label{intro}

Throughout this paper, the (na\"\i ve) height $H(\alpha )$ of an algebraic number $\alpha$
is the (na\"\i ve) height $H(P)$ of its minimal defining polynomial $P(X)$ over $\Z$, that is, the 
maximum of the absolute values of the coefficients of $P(X)$. 
The present note is a follow-up of \cite{BajBu24}, where we investigated 
the effective approximation to complex algebraic numbers
by algebraic numbers of degree at most $4$. 
Our previous results were restricted to totally complex numbers (that is, complex algebraic numbers having no real 
Galois conjugates). Here, we focus our attention on quadratic approximation to 
complex non-real algebraic numbers having at least one real 
Galois conjugate. 

Let $\xi$ be a complex non-real algebraic number of degree $d \ge 2$. By a Liouville-type argument 
(see e.g. \cite{Gu67} or below the statement of Theorem \ref{thquadr}), 
there exists an effectively computable, positive number $c_1(\xi)$ such that
\beq  \label{liouvC}
|\xi - \alpha| > c_1 (\xi) H(\alpha)^{- \frac{d}{2}},  \quad  \hbox{for every quadratic complex number $\alpha \not= \xi$}.
\eeq
When $d = 2, 3$, this is best possible; see e.g. \cite[Proposition 10.2]{BuEv09}. 
For $d \ge 4$, it follows from \cite[Corollary 2.4]{BuEv09} (see also \cite[Lemma 4.7]{BajBu24})
that  \eqref{liouvC} is best possible (up to the value of $c_1(\xi)$)
if and only if $\Q(\xi)$ is a quartic CM-field (a number field $K$ is a CM-field if it is a quadratic extension $K/F$ 
such that the image of every complex embedding of $F$ 
is contained in $\R$, but there is no complex embedding of $K$ whose image is contained in $\R$). 
In the case where $\Q(\xi)$ is not a quartic CM-field, \eqref{liouvC} can be considerably improved by means of the 
Schmidt Subspace Theorem. Namely, for every $\eps > 0$,  \cite[Corollary 2.4]{BuEv09}
asserts that there exists a positive number $c_2 (\xi, \eps)$ such that 
\beq  \label{SST}
|\xi - \alpha| > c_2 (\xi, \eps) H(\alpha)^{- \frac{3}{2} - \eps},  \quad  \hbox{for every quadratic complex number $\alpha \not= \xi$}.
\eeq
The current techniques do not yield an explicit value for $c_2 (\xi, \eps)$. 
Hence, there is a large gap between the effective statement \eqref{liouvC} and the ineffective statement \eqref{SST}. 
In the present note, we apply Baker's theory of linear forms in logarithms to get an 
effective improvement of \eqref{liouvC}. 
We establish the

\begin{theorem}  \label{thquadr} 
Let $\xi$ be a complex non-real algebraic number of degree $d \ge 4$. 
If $d=4$, assume furthermore 
that $\Q(\xi)$ is not a CM-field. 
Then, there exist effectively computable positive real numbers $\kappa (\xi)$ and $c(\xi)$ such that 
$$
|\xi - \alpha| > c(\xi) H(\alpha)^{ - \frac{d}{2} + \kappa(\xi) }, \quad 
\hbox{for every quadratic complex number $\alpha$.} 
$$
\end{theorem}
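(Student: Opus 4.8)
The plan is to follow the standard Baker-type strategy for effective improvements of Liouville's inequality, adapted to the setting where the approximant $\alpha$ is quadratic rather than rational. Suppose, for contradiction, that $\alpha$ is a quadratic complex number with $|\xi - \alpha|$ very small relative to $H(\alpha)$, say smaller than $c(\xi) H(\alpha)^{-d/2 + \kappa}$ for a $\kappa$ to be fixed later. The first step is to replace $\alpha$ by an explicit algebraic datum amenable to logarithmic forms: since $\alpha$ generates a quadratic field $\Q(\alpha) = \Q(\sqrt{m})$, one encodes $\alpha$ via the two roots $\alpha, \alpha'$ of its minimal polynomial $aX^2 + bX + c$, and forms an auxiliary algebraic number — typically a resultant or a ratio of linear factors — living in a fixed number field (the normal closure of $\Q(\xi)$, or a compositum with $\Q(\alpha)$). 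Concretely, following the template of \cite{BajBu24}, one considers the nonzero algebraic number obtained by evaluating the minimal polynomial of $\alpha$ (or a related resultant $\mathrm{Res}$) at the conjugates of $\xi$; its absolute value is, on one hand, bounded below by a Liouville inequality in terms of $H(\alpha)$ and $H(\xi)$, and on the other hand is forced to be extremely small because one of its factors, essentially $\xi - \alpha$, is tiny. This tension is the engine of the argument.

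The second and central step is to arrange that this small algebraic quantity can be written as a linear form in logarithms of fixed algebraic numbers with bounded coefficients. Here is exactly where the hypotheses ($d \ge 4$, and $\Q(\xi)$ not a CM-field when $d = 4$) enter, via \cite[Corollary 2.4]{BuEv09} (equivalently \cite[Lemma 4.7]{BajBu24}): these hypotheses guarantee that the relevant system of conjugates of $\xi$ does not degenerate, so that one genuinely has at least two multiplicatively independent "units" (or $S$-units, after controlling a bounded set of primes) in play, rather than the single CM-type relation that makes \eqref{liouvC} sharp. One then expresses $\mathrm{Res}$ (or the product of the $\xi_i - \alpha$ over embeddings) as a unit times a bounded product of fixed primes, takes logarithms, and obtains a linear form $\Lambda = b_1 \log \eta_1 + \cdots + b_k \log \eta_k$ in which the $\eta_j$ are fixed algebraic numbers depending only on $\xi$, and the integer coefficients $b_j$ are $O(\log H(\alpha))$. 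The smallness of $|\xi - \alpha|$ translates into $|\Lambda|$ being exponentially small in $\log H(\alpha)$, i.e. $\log|\Lambda| \le -c_3 (\log H(\alpha))^{1+\delta}$ or at least $\le -c_3 H(\alpha)^{\kappa'}$ for a suitable power saving coming from the gap between $d/2$ and the Liouville exponent for the auxiliary number.

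The third step is to invoke Baker's theory — a quantitative lower bound for linear forms in logarithms of algebraic numbers (e.g. Matveev's theorem, or Baker–Wüstholz), possibly in the $p$-adic form if a finite prime contributes — which gives $\log |\Lambda| \ge -c_4 \log B$, where $B = \max |b_j| \ll \log H(\alpha)$ and $c_4$ is effectively computable in terms of the heights and degrees of the fixed $\eta_j$, hence in terms of $\xi$ alone. Comparing the upper bound $\log|\Lambda| \le -c_3 H(\alpha)^{\kappa'}$ (or the power-of-log version) with the lower bound $\log|\Lambda| \ge -c_4 \log\log H(\alpha)$-type estimate yields $H(\alpha) \ll_\xi 1$, i.e. $H(\alpha)$ is bounded by an effectively computable constant; the finitely many remaining $\alpha$ are absorbed into the constant $c(\xi)$. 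Choosing $\kappa(\xi)$ small enough to make all the inequalities consistent completes the argument. The main obstacle I anticipate is the second step: ensuring that the auxiliary algebraic number really does decompose as a linear form in a \emph{fixed} set of logarithms with the \emph{right} number of terms and genuinely independent entries — this is precisely the non-degeneracy input, and one must track carefully how the CM exclusion at $d = 4$ (and its automatic failure for $d \ge 5$, or more precisely for $\Q(\xi)$ not CM) produces the needed multiplicative independence; a secondary technical point is controlling the bounded set $S$ of primes and the fundamental units uniformly, so that $c_4$ depends only on $\xi$ and not on $\alpha$.
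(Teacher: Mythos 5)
Your overall architecture (norm form, auxiliary small algebraic number, Baker's theory) is the right family of ideas, but there are two genuine gaps, one quantitative and one structural. On the quantitative side: you assert that the linear form $\Lambda$ involves only \emph{fixed} algebraic numbers depending on $\xi$, that $\log|\Lambda|\le -c_3H(\alpha)^{\kappa'}$, and that Baker's lower bound then forces $H(\alpha)\ll_\xi 1$. None of these can hold. After extracting a unit $u$ from $P_\alpha(\xi)$ one is left with an element $\mu$ whose height is comparable to $\tfrac1d\log|\Norm_{K/\Q}(P_\alpha(\xi))|$, which is \emph{not} bounded in terms of $\xi$ alone and may grow like a multiple of $\log H(\alpha)$; and the smallness of $\Lambda$ is only polynomial in $H(\alpha)$, i.e.\ $\log|\Lambda|\ll -\log H(\alpha)$, not $-H(\alpha)^{\kappa'}$. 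A generic Matveev-type estimate $\log|\Lambda|\gg -C\,h_*(\mu)\log B$ with $B\ll\log H(\alpha)$ then gives nothing close to a power saving. What actually closes the argument is (i) the Feldman--Baker refinement in which the logarithmic factor is $\log_*\bigl(B/h_*(\mu)\bigr)$ (Theorem \ref{lflog}), yielding $h(P_\alpha(\xi))\ll h_*(\mu)$, together with (ii) a separate reduction lemma (Lemma \ref{thxmu}, i.e.\ \cite[Theorem 4.6]{BajBu24}) converting that inequality into the exponent gain $\kappa(\xi)$, plus a treatment of the degenerate case $\Lambda=0$ via the Loxton--van der Poorten bound (Theorem \ref{MultRel}). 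The conclusion is an improved exponent, not finiteness of the exceptional $\alpha$; finiteness would amount to an effective Roth/Schmidt-type theorem.

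The structural gap is the actual point of this paper as opposed to \cite{BajBu24}: here $\xi$ has a real Galois conjugate, so proving a lower bound for $|\Norm_{K/\Q}(P_\alpha(\xi))|$ valid for \emph{all} quadratic $P_\alpha$, as your scheme requires, would in particular improve Liouville for approximation to the \emph{real} conjugates of $\xi$ by quadratic numbers --- an open problem. The missing idea is a dichotomy on which conjugates of $P_\alpha(\xi)$ are smallest. If $|P_\alpha(\xi)|=|P_\alpha(\overline{\xi})|$ are the two smallest among the $|P_\alpha(\sigma_i(\xi))|$, the linear-forms argument above applies. Otherwise some conjugate $\xi_0$ satisfies $|P_\alpha(\xi_0)|<|P_\alpha(\xi)|$, and replacing the trivial bound on $|P_\alpha(\xi_0)|$ in \eqref{normll} by this inequality gives $1\ll|\xi-\alpha|^3H(\alpha)^d$, hence $|\xi-\alpha|\gg H(\alpha)^{-d/2+2/3}$ for $d\ge4$, with no transcendence input at all. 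Your sketch does not address this case, and it is also rather vague about where the quartic non-CM hypothesis enters (it rules out the sharpness of \eqref{liouvC} and, in the proof, guarantees the unit-rank and nonvanishing needed to set up the linear form).
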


Under the additional assumption that $\xi$ is totally complex, Theorem \ref{thquadr} has been proved in \cite{BajBu24}. 
Both proofs use the same method, the main difference being a simple observation that we overlooked in \cite{BajBu24}. 
Let us explain it below.

The question of effective approximation to algebraic 
numbers by algebraic numbers of bounded degree 
is deeply connected to the effective resolution of norm-form equations. 
Indeed, let $\xi$ be an algebraic number of degree $d \ge 3$. 
Set $K = \Q (\xi)$. 
Put $\delta = 1$ if $\xi$ is real and $\delta = 2$ otherwise. 
Let $n$ be a positive integer with $n \le d-2$. 
Let $\alpha$ be an algebraic number of degree $n$. 
Then, its minimal defining polynomial $P_{\alpha} (X) := x_0 + x_1 X + \ldots + x_n X^n$ satisfies
$$
|P_{\alpha} (\xi)| = |x_0 + x_1 \xi + \ldots + x_n \xi^n| \ll_{\xi, n} |\xi - \alpha| \, H(P_{\alpha}) 
$$
and 
\beq   \label{normll}
1 \le |\Norm_{K / \Q} (P_{\alpha} (\xi))| \ll_{\xi, n}  |P_{\alpha} (\xi)|^{\delta} \cdot H(\alpha)^{d- \delta} 
\ll_{\xi, n}  |\xi - \alpha|^\delta \, H(\alpha)^d, 
\eeq
since $|P_{\alpha} (\xi)| = |P_{\alpha} (\overline{\xi})|$ if $\xi$ is non-real with complex conjugate $\overline{\xi}$. 
Here and below, the constants implicit in $\ll_{x, y, \ldots}$ and in $\gg_{x, y, \ldots}$ 
are positive, effectively computable and depend at most on $x, y, \ldots$   
This gives immediately that 
$$
|\xi - \alpha| \gg_{\xi, n} H(\alpha)^{-d}, \quad \hbox{ if $\xi$ is real,}
$$ 
and 
$$
|\xi - \alpha| \gg_{\xi, n} H(\alpha)^{-d/2}, \quad \hbox{if $\xi$ is complex non-real}.
$$
By \eqref{normll}, these lower bounds for $|\xi - \alpha|$ obtained by a Liouville-type 
argument can be improved (at the level of the exponent of $H(\alpha)$) as soon as we can establish a lower bound 
for $|\Norm_{K / \Q} (P_{\alpha} (\xi))|$ of the form $H(P_{\alpha})^c$,
valid for every integer polynomial $P_\alpha (X)$ of height at least $H_0$, 
for some positive, effectively computable real numbers $c$ and $H_0$, depending only on $\xi$ and $n$. 
For $n=1$ (and $\xi$ real, otherwise there is nothing to do), this has been done by 
Feldman \cite{Fe71} (see also \cite{Ba73}), who applied the theory of linear forms in logarithms, 
first developed by Alan Baker \cite{Ba66}. 

Recently,
under suitable assumptions on $\xi$, we showed in \cite{BajBu24} that there exist 
positive, effectively computable, real numbers $\delta (\xi)$ and $H_0 (\xi)$ such that 
\beq \label{normgg}
|\Norm_{K / \Q} (P(\xi))| \ge H(P)^{\delta (\xi)} 
\eeq
holds for every integer polynomial $P(X)$ of height at least $H_0 (\xi)$ and degree at most $2$ (resp., $3$, $4$). 
This implies that no roots of $P(X)$ is very close to $\xi$
and allowed us to obtain various effective results for approximation to $\xi$ 
by algebraic numbers of degree at most $2$ (resp., $3$, $4$) of the form
$$
|\xi - \alpha| \gg_{\xi} H(\alpha)^{-d/2 + \kappa(\xi)}, 
$$
for some positive, effectively computable, number $\kappa (\xi)$. 

It is important to observe that if this method can be applied to $\xi$, then it applies as well 
to {\it every} Galois conjugate of $\xi$. 
This remark explains why we required in \cite{BajBu24} the algebraic number $\xi$ to be totally complex: no  
effective result better than Liouville's inequality is known for approximation to real algebraic numbers by quadratic numbers, 
and this question is likely to be very difficult. 
However, to get an effective improvement over Liouville's inequality, 
we do not need to prove \eqref{normgg} for {\it all} integer polynomials $P_{\alpha} (X)$. Indeed,
if there is a Galois conjugate $\xi_0$ of $\xi$ such that $|P_{\alpha}(\xi_0)| < |P_{\alpha}(\xi)|$, then the trivial 
upper bound $|P_{\alpha}(\xi_0)| \ll_{\xi, n} H(P_{\alpha})$ used in \eqref{normll} can be 
replaced by $|P_{\alpha}(\xi_0)| < |P_{\alpha}(\xi)|$
and we eventually obtain 
$$
1 \ll_{\xi, n}  |\xi - \alpha|^3 \, H(\alpha)^d.
$$
This provides an effective improvement on Liouville's inequality. 
A version of this observation 
has already been used at the end of the proof of \cite[Theorem 2.3]{BajBu24}.

The next section gathers two estimates from the theory of linear forms in logarithms. 
Theorem \ref{thquadr} is established in Section \ref{S3}. 

Throughout the paper, we let $h$ denote the logarithmic Weil height and we set 
$h_* ( \cdot ) = \max\{h (\cdot ), 1\}$ and $\log_* (\cdot ) = \max\{ \log (\cdot ), 1 \}$.

\section{Auxiliary results}

We recall first a classical estimate for linear forms in complex logarithms of algebraic numbers.

\begin{theorem} \label{lflog} 
Let $n \ge 1$ be an integer. 
Let $\alpha_1, \ldots, \alpha_n, \alpha_{n+1}$ be non-zero algebraic numbers.  Let $D$ denote the degree of the algebraic number field generated by $\alpha_1, \ldots, \alpha_{n+1}$ over $\Q$. Let $b_1, \ldots, b_n$ be non-zero integers and set
$$
B = \max\{|b_1|, \ldots, |b_n|\}. 
$$
If $\alpha_1^{b_1} \ldots \alpha_n^{b_n} \alpha_{n+1}  \not= 1$, then we have
$$
\log |\alpha_1^{b_1} \ldots \alpha_n^{b_n} \alpha_{n+1} - 1|  
> - c(n, D) \, h_* (\alpha_1) \cdots h_*(\alpha_{n+1}) \, \log_* \Bigl( \frac{B}{h_* (\alpha_{n+1})} \Bigr). 
$$
\end{theorem}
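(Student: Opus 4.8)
The plan is to prove this by Baker's method: to bound from below the linear form
\[
\Lambda := b_1\log\alpha_1 + \cdots + b_n\log\alpha_n + \log\alpha_{n+1}
\]
(for a fixed, suitable choice of the complex logarithms), one constructs an auxiliary analytic function and plays an analytic upper bound for its size against an arithmetic, Liouville-type lower bound. Since the quantity appearing in the statement is $\alpha_1^{b_1}\cdots\alpha_n^{b_n}\alpha_{n+1} - 1 = e^{\Lambda}-1$, and $|e^{\Lambda}-1|$ is comparable to $|\Lambda|$ once $|\Lambda|$ is small, it suffices to lower-bound $|\Lambda|$; I would argue by contradiction, assuming $\log|\Lambda|$ to be smaller than the asserted right-hand side. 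Write $A_i := h_*(\alpha_i)$ throughout, and note that the coefficient of $\log\alpha_{n+1}$ is $1$, so that $\alpha_{n+1} = e^{\Lambda}\prod_{i=1}^{n}\alpha_i^{-b_i}$.

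The heart of the argument is the construction, via Siegel's Lemma, of a nonzero auxiliary function
\[
\Phi(z) = \sum_{\lambda_1=0}^{L}\cdots\sum_{\lambda_n=0}^{L} p(\lambda)\, \Bigl(\prod_{i=1}^{n}\alpha_i^{\lambda_i}\Bigr)^{\!z},
\]
with rational-integer coefficients $p(\lambda)$ (after clearing denominators) whose heights are controlled by the $A_i$ and by $D$. The integer parameters $L$ and an order-of-vanishing parameter $T$ are chosen so that the homogeneous linear system imposing $\Phi^{(t)}(s)=0$ for $0\le t<T$ and $s$ ranging over an initial block of integers has more unknowns $(L+1)^n$ than equations, which is exactly what makes Siegel's Lemma applicable and fixes the target exponent. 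The appearance of $\log_*(B/A_{n+1})$ in the conclusion is forced at this stage: because $\alpha_{n+1}$ is singled out by its coefficient $1$ rather than a large $b_i$, the admissible range of $z$, and hence the number of usable interpolation points, scales with $B/A_{n+1}$, and this governs the surviving logarithmic factor after the final optimization.

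From here I would run the standard extrapolation. Using the hypothesis that $|\Lambda|$ is extremely small, the relation $\alpha_{n+1}\approx\prod_i\alpha_i^{-b_i}$ lets one show, by the Schwarz Lemma and the maximum-modulus principle applied on a growing sequence of discs, that $\Phi$ and its derivatives remain small, and in fact vanish, at many more integer points than were imposed initially. Once the zero set is large enough, a zero estimate (Philippon's, or a one-variable argument, since $\Phi$ is an exponential polynomial in a single variable with at most $(L+1)^n$ distinct frequencies $\prod_i\alpha_i^{\lambda_i}$) forces a nontrivial multiplicative dependence among the $\alpha_i$; this contradicts the nonvanishing of $\Phi$ unless $|\Lambda|$ is bounded below by the claimed quantity, completing the contradiction.

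The main obstacle I anticipate is not any single step but the \emph{simultaneous} calibration of $L$, $T$, and the number of extrapolation points so that the final exponent is precisely $A_1\cdots A_{n+1}\log_*(B/A_{n+1})$, with no spurious extra factor of $B$ and no power of $D$ beyond what is absorbed into $c(n,D)$. Keeping the dependence on $B$ down to a single logarithm, rather than the power of $B$ a naive Liouville estimate would yield, is exactly what Baker's extrapolation buys, and matching the denominator $A_{n+1}$ inside that logarithm requires the interpolation block to be measured against $A_{n+1}$ from the outset. Carrying the constants faithfully through the Siegel-Lemma height bound, the analytic estimate, and the arithmetic lower bound while respecting these calibrations is the delicate, bookkeeping-heavy part; everything else is structurally routine within Baker's framework. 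Alternatively, the whole conclusion can be deduced from a general explicit estimate such as Matveev's by specializing the coefficient of $\log\alpha_{n+1}$ to $1$ and rewriting the resulting $\log B$ factor in terms of $B/A_{n+1}$, trading the self-contained construction for a black-box citation.
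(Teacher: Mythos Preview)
Your outline of Baker's method is a reasonable sketch of how the cited references establish this estimate, but note that the paper itself gives no self-contained proof: its entire proof is the single line ``See \cite{Bu18b} or \cite{WaLiv}.'' In other words, the paper treats Theorem~\ref{lflog} as a black-box input from the standard literature on linear forms in logarithms, exactly the alternative you mention in your final sentence. So your proposal is not wrong, but it goes well beyond what the paper does; for the purposes of matching the paper, the citation alone suffices.
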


\begin{proof}
See \cite{Bu18b} or \cite{WaLiv}. 
\end{proof}

We also need the following auxiliary result on multiplicative dependence relations between 
algebraic numbers. A version of it was originally proved by Loxton and van der Poorten \cite{LovdP83}.

\begin{theorem} \label{MultRel}
Let $m\ge 2$ be an integer and $\alpha_1,\ldots,\alpha_m$ be
multiplicatively dependent non-zero algebraic numbers. Let 
$\log \alpha_1, \ldots , \log \alpha_m$ be any determination of 
their logarithms. 
Let $D$ be the degree of the number field
generated by $\alpha_1,\ldots,\alpha_m$ over $\Q$. 
For $1\le j\le m$, let $A_j$ be a real number satisfying
$$
\log A_j \ge \max\Bigl\{h(\alpha_j),{|\log \alpha_j| \over D}, 1\Bigr\}.
$$
Then there exist rational integers $n_1,\ldots,n_m$, not all of which
are zero, such that 
$$
n_1 \log \alpha_1+ \cdots + n_m \log \alpha_m=0, \quad
\alpha_1^{n_1} \cdots \alpha_m^{n_m} = 1, 
$$
and
$$
|n_k|\le 
\bigl(11(m-1)D^3\bigr)^{m-1}{(\log A_1)
\cdots(\log A_m)\over \log A_k}, \quad \hbox{for $1\le k\le m$.}     
$$
\end{theorem}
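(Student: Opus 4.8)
The plan is to translate the multiplicative problem into a problem of geometry of numbers for a lattice of integer relations, and then to read off the bound on $|n_k|$ from a Hadamard/determinant estimate. Write $K = \Q(\alpha_1, \ldots, \alpha_m)$, so $[K:\Q] = D$, and let $S$ be the finite set of places of $K$ consisting of all archimedean places together with the finite places at which at least one $\alpha_j$ fails to be a unit; then every $\alpha_j$ is an $S$-unit. To each $\alpha_j$ I attach its logarithmic vector $\mathbf{l}_j = \bigl( d_v \log |\alpha_j|_v \bigr)_{v \in S}$, where $d_v = [K_v : \Q_v]$ is the local degree. The product formula forces $\sum_{v \in S} d_v \log|\alpha_j|_v = 0$, so the $\mathbf{l}_j$ lie in the hyperplane of zero coordinate-sum, and the key dictionary is that $\prod_j \alpha_j^{n_j}$ is a root of unity if and only if $\sum_j n_j \mathbf{l}_j = 0$ (forward by $|\zeta|_v = 1$; backward by Kronecker's theorem, since then $\prod_j\alpha_j^{n_j}$ has all absolute values equal to $1$). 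Since the group $\mu_K$ of roots of unity of $K$ is finite of order $w$ bounded in terms of $D$ alone, any integer vector $\mathbf n$ with $\sum_j n_j \mathbf{l}_j = 0$ yields, after replacing $\mathbf n$ by $w \mathbf n$, an honest relation $\prod_j \alpha_j^{w n_j} = 1$. Thus it suffices to produce a short nonzero integer relation among $\mathbf{l}_1, \ldots, \mathbf{l}_m$, and the hypothesis of multiplicative dependence guarantees that these vectors are linearly dependent over $\R$, so at least one such relation exists.

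Next I would make the two quantitative inputs explicit. On one hand, the size of each $\mathbf{l}_j$ is controlled by the height: splitting $\sum_v d_v \log|\alpha_j|_v$ into positive and negative parts and using the product formula gives $\sum_{v \in S} d_v \bigl| \log|\alpha_j|_v \bigr| = 2 D\, h(\alpha_j)$, whence $\|\mathbf{l}_j\| \le 2 D\, h(\alpha_j) \le 2 D \log A_j$, and crucially this bound is independent of the (possibly large) cardinality of $S$. On the other hand, the shape of the target bound is exactly the shape of a generalized cross product: extracting an $(m-1)\times(m-1)$ minor of the matrix whose rows are the $\mathbf{l}_j$, with the $k$-th row deleted, Hadamard's inequality bounds that minor by $\prod_{j \ne k} \|\mathbf{l}_j\| \le \prod_{j \ne k} (2 D \log A_j)$. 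To turn this heuristic into an integer relation with simultaneously bounded coordinates I would apply Minkowski's convex body theorem to the relation lattice $\Lambda_0 = \{ \mathbf n \in \Z^m : \sum_j n_j \mathbf{l}_j = 0\}$, using a symmetric box whose $k$-th side length is a constant multiple of $\prod_{j \ne k} \log A_j$. The volume computation reduces, via the standard comparison between the covolume of $\Lambda_0$ and the maximal minors of the matrix $(\mathbf{l}_j)_j$, precisely to the Hadamard bound above, and produces a nonzero $\mathbf n \in \Lambda_0$ with $|n_k| \le (11(m-1)D^3)^{m-1} \prod_{j\ne k}\log A_j$. Multiplying by $w$ and absorbing $w$ into the constant then finishes the construction.

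The main obstacle is bookkeeping the constant so that it depends only on $m$ and $D$. Two points deserve care. First, the passage from the real linear dependence to a primitive integer relation: the real dependency furnished by the cross product need not be proportional to a small integer vector, and controlling the proportionality factor is exactly what forces the use of geometry of numbers rather than a bare determinant identity. This step requires a lower bound for the covolume of the lattice generated by the $\mathbf{l}_j$, equivalently a positive lower bound, depending only on $D$, for the logarithmic heights of non-torsion elements of $K$, and it is here that the extra powers of $D$ (yielding $D^3$ rather than just $D$) enter. Second, the set $S$ may be arbitrarily large, so every estimate must be arranged to see only the ambient rank $\le m-1$ of the span and the global height, never $|S|$ itself; the $S$-independent norm bound above and the restriction to $(m-1)\times(m-1)$ minors are precisely what make this possible. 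Once these uniformities are secured, the explicit constant $(11(m-1)D^3)^{m-1}$ follows by tracking the Minkowski and Hadamard constants together with the order $w$ of $\mu_K$.
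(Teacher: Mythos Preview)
The paper does not actually prove this result; its entire proof is the sentence ``See \cite[Theorem 10.5]{Bu18b}.'' So there is no in-house argument to compare against, and your outline---logarithmic $S$-unit embedding, relation lattice, Minkowski plus Hadamard, absorption of torsion via $w=|\mu_K|$, and a Dobrowolski-type height lower bound to control the extra powers of $D$---is exactly the standard strategy going back to Loxton--van der Poorten and is presumably what the cited reference carries out. For the application made later in the paper (Subsubcase~$(\star)$), only the multiplicative conclusion $\alpha_1^{n_1}\cdots\alpha_m^{n_m}=1$ with the stated bounds is used, and your sketch delivers that.

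There is, however, one genuine gap relative to the full statement. Your vectors $\mathbf{l}_j=(d_v\log|\alpha_j|_v)_{v\in S}$ record only absolute values, so a relation $\sum_j n_j\mathbf{l}_j=0$ yields (after multiplying by $w$) only $\prod_j\alpha_j^{n_j}=1$, hence $\sum_j n_j\log\alpha_j\in 2\pi i\,\Z$; it does \emph{not} give $\sum_j n_j\log\alpha_j=0$ for the fixed determinations, and multiplying by $w$ does nothing to kill that residual integer. To capture the vanishing of the linear form in logarithms one has to keep track of the imaginary parts as well: the usual device is to adjoin an extra coordinate carrying $\mathrm{Im}\,\log\alpha_j$ (equivalently, to throw in $i\pi$ as an $(m{+}1)$-st logarithm) and run the same geometry-of-numbers argument in one more dimension. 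This is precisely why the hypothesis demands $\log A_j\ge|\log\alpha_j|/D$ and not merely $\log A_j\ge h(\alpha_j)$; your sketch never uses that part of the hypothesis, which is the tell that the argument as written cannot yet see the argument of $\alpha_j$.
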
 

\begin{proof}
See \cite[Theorem 10.5]{Bu18b}. 
\end{proof}

\section{Proof of Theorem \ref{thquadr}} \label{S3}

Set $K = \Q(\xi)$.  
Let $\alpha$ be a quadratic complex (non-real) algebraic number.
Let $P_\alpha (X) = x_0 + x_1 X + x_2 X^2$ denote its minimal defining polynomial over $\Z$. 
Set
$$  
\Norm_{K/\Q} (x_0 + x_1\xi + x_2\xi^2 ) = m 
$$  
and 
$$
\bfx = x_0 + x_1\xi + x_2\xi^2 = P_\alpha (\xi). 
$$ 
Let $\sigma_1, \ldots , \sigma_d$ denote the $d$ embeddings of $K$ into $\C$ 
numbered in such a way that 
$$
|\sigma_1( \bfx  )|  \ge  |\sigma_2( \bfx  )| \ge \cdots \ge |\sigma_{d-1}( \bfx  )| \ge |\sigma_d( \bfx  )|. 
$$
For later use, recall that 
$$
d^{-1} \log |\sigma_1( \bfx  )|  \le h(\bfx) \le \log |\sigma_1( \bfx  )|.
$$
and observe that
$$
\sigma_1(\mathbf x) \cdot \sigma_2(\mathbf x) \cdots \sigma_d(\mathbf x) = m. 
$$
By \cite[Lemma 4.5]{BajBu24} (which follows from the proof of \cite[Proposition 4.3.12]{EvGy15}), 
there exists a unit $u$ in $K$ such that, putting $\mu = \bfx / u$, we have 
$$
C_1^{-1} |m|^{1/d} \le |\sigma (\mu) | \le C_1 |m|^{1/d}, 
$$
for every complex embedding $\sigma$ of $K$, with a constant $C_1 = C_1 (K) \ge 1$.

The next lemma is a reformulation of \cite[Theorem 4.6]{BajBu24}. 

\begin{lemma}    \label{thxmu}
Keep the above notation. 
Assume there are effectively computable positive real numbers 
$\kappa_1$ and $h_0$, depending only on $\xi$, such that 
$$
h(\bfx) \le \kappa_1 h_* (\mu)
$$
holds when $h(\bfx) > h_0$. 
Then, there exist effectively computable, positive $c, \kappa$, depending only on $\xi$, such that 
$$
|\xi - \alpha| > c \, H(\alpha)^{- \frac{d}{2} + \kappa }.
$$ 
\end{lemma}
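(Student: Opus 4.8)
The plan is to convert the hypothesis $h(\bfx)\le\kappa_1 h_*(\mu)$ into a lower bound for $t:=|P_\alpha(\xi)|=|P_\alpha(\overline{\xi})|$, and then to read off the approximation estimate from the factorisation $P_\alpha(\xi)=\bigl(x_1+x_2(\xi+\alpha)\bigr)(\xi-\alpha)$. First I would make two harmless reductions. Replacing $\xi$ by $a_0\xi$, where $a_0$ is the leading coefficient of the minimal polynomial of $\xi$, one may assume that $\xi$ is an algebraic integer: this changes the relevant quantities only by factors bounded in terms of $\xi$, since $\alpha\mapsto a_0\alpha$ is a bijection of the quadratic complex numbers and $H(a_0\alpha)\asymp_\xi H(\alpha)$. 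One may also assume $|\xi-\alpha|<1$, the conclusion being trivial otherwise. Then $|\alpha|\le 1+|\xi|$, so $|x_1+x_2(\xi+\alpha)|\ll_\xi H(\alpha)$, and likewise $|P_\alpha(\xi_0)|\ll_\xi H(\alpha)$ for every Galois conjugate $\xi_0$ of $\xi$. Since $\deg\xi=d>2$ we have $\bfx\neq 0$, hence $m\neq 0$ and $|m|\ge 1$; moreover $\mu=\bfx/u$ is an algebraic integer whose archimedean absolute values are all at most $C_1|m|^{1/d}$, so that $h_*(\mu)\le 1+\log C_1+\tfrac1d\log|m|$.

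Now I would combine this with the recalled inequality $\log|\sigma_1(\bfx)|\le d\,h(\bfx)$ and with the hypothesis — noting that $h(\bfx)\le\max\{h_0,\kappa_1 h_*(\mu)\}$ holds whether or not $h(\bfx)>h_0$ — to get $|\sigma_1(\bfx)|\ll_\xi|m|^{\kappa_1}$. Expanding the norm along the $d$ embeddings of $K$ gives
$$
|m|=\prod_{i=1}^{d}|\sigma_i(\bfx)|=t^2\prod_{\xi_0\neq\xi,\,\overline{\xi}}|P_\alpha(\xi_0)|,
$$
the product running over the $d-2$ conjugates of $\xi$ distinct from $\xi$ and $\overline{\xi}$. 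The crude bound $|P_\alpha(\xi_0)|\ll_\xi H(\alpha)$ already yields $|m|\ll_\xi t^2 H(\alpha)^{d-2}$; inserting this into $|\sigma_1(\bfx)|\ll_\xi|m|^{\kappa_1}$ gives $|\sigma_1(\bfx)|\ll_\xi t^{2\kappa_1}H(\alpha)^{(d-2)\kappa_1}$, and since $|P_\alpha(\xi_0)|\le|\sigma_1(\bfx)|$ for each of those $d-2$ conjugates, re-inserting this bound into the norm identity produces
$$
1\le|m|\ll_\xi t^{2+2(d-2)\kappa_1}\,H(\alpha)^{(d-2)^2\kappa_1}.
$$
Hence $t\gg_\xi H(\alpha)^{-\kappa'}$ with $\kappa'=\dfrac{(d-2)^2\kappa_1}{2\bigl(1+(d-2)\kappa_1\bigr)}$, and since $|\xi-\alpha|=t/|x_1+x_2(\xi+\alpha)|\gg_\xi t/H(\alpha)$ I would conclude
$$
|\xi-\alpha|\gg_\xi H(\alpha)^{-1-\kappa'}=H(\alpha)^{-\frac{d}{2}+\kappa},\qquad\kappa:=\frac{d-2}{2\bigl(1+(d-2)\kappa_1\bigr)},
$$
which is positive because $d\ge 4$ and $\kappa_1>0$. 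All implied constants are effectively computable in terms of $\xi$, which provides the required $c$ and $\kappa$.

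I do not expect a genuine obstacle here: the only delicate point is the passage $h_*(\mu)\le 1+\log C_1+\tfrac1d\log|m|$, which rests on $\mu$ being an algebraic integer with all conjugates of size $\asymp|m|^{1/d}$ — precisely the content of the unit lemma \cite[Lemma 4.5]{BajBu24} recalled above — and the remainder is bookkeeping. The substantive work, namely establishing the hypothesis $h(\bfx)\le\kappa_1 h_*(\mu)$, lies outside this lemma and is where Theorems \ref{lflog} and \ref{MultRel} will enter. Finally, one can enlarge $\kappa$ somewhat by disposing separately of those $\alpha$ for which some conjugate $\xi_0\neq\xi,\overline{\xi}$ already satisfies $|P_\alpha(\xi_0)|\le t$: for these the norm identity gives $|m|\ll_\xi t^3 H(\alpha)^{d-3}$ and hence the much stronger bound $|\xi-\alpha|\gg_\xi H(\alpha)^{-d/3}$, with no appeal to the hypothesis at all — this is the ``simple observation'' mentioned in the introduction.
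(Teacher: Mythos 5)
Your proof is correct. Note that the paper itself gives no argument for this lemma --- it is dispatched as ``a reformulation of [BajBu24, Theorem 4.6]'' --- and the route sketched in the introduction is slightly different from yours: there one first converts the hypothesis into a norm lower bound $|m|=|\Norm_{K/\Q}(P_\alpha(\xi))|\ge H(\alpha)^{\delta}$, which requires the (standard but not free) inequality $h(\bfx)\gg_\xi\log H(\alpha)$, i.e.\ $\max_i|P_\alpha(\xi_i)|\gg_\xi H(P_\alpha)$ coming from the linear independence of $1,\xi,\xi^2$ over $\Q$ (here $d\ge 3$ is used), and then feeds this into the chain \eqref{normll} with $\delta=2$ to get $H(\alpha)^{\delta}\ll|\xi-\alpha|^2H(\alpha)^d$. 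Your bootstrapping through the norm identity --- bounding $|\sigma_1(\bfx)|\ll|m|^{\kappa_1}$, then passing twice through $|m|=t^2\prod|P_\alpha(\xi_0)|$ --- sidesteps that linear-independence input entirely and still produces an explicit $\kappa=\tfrac{d-2}{2(1+(d-2)\kappa_1)}$, which I checked is consistent ($-1-\kappa'=-\tfrac d2+\kappa$ and $0<\kappa<\tfrac{d-2}{2}$). Two minor points. First, the reduction to $\xi$ an algebraic integer must be accompanied by the remark that the hypothesis $h(\bfx)\le\kappa_1h_*(\mu)$ transfers to the rescaled data with $\kappa_1$ replaced by $\kappa_1+O_\xi(1)$ (all of $h(\bfx)$, $h_*(\mu)$, $\log|m|$ shift by $O_\xi(1)$ and $h_*\ge 1$); alternatively the reduction is avoidable, since the finite places contribute only $O_\xi(1)$ to $h(\mu)$ and $|m|\gg_\xi 1$ follows from the resultant of $P_\alpha$ with the minimal polynomial of $\xi$ being a nonzero integer. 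Second, your closing observation about conjugates $\xi_0$ with $|P_\alpha(\xi_0)|\le t$ is exactly Case 2 of the paper's proof of Theorem \ref{thquadr}, so it belongs there rather than inside this lemma; it is not needed for the statement at hand.
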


Our aim is to prove the existence of $\kappa_1$ and $h_0$ as in Lemma \ref{thxmu}. 
Thus, without any loss of generality, we assume that 
\beq  \label{xgemu}
h(\bfx) \ge 2 h_*(\mu), 
\eeq
and also
$$
|\sigma_d (\bfx)| < 1. 
$$ 
Indeed, otherwise we get at once $|\sigma_1 (\bfx)| \le |m|$ and we obtain
$$
d h_*(\mu) \gg  \log_* |m| \ge \log |\sigma_1 (\bfx)| \ge h(\bfx). 
$$
Here and below, the constants 
implicit in $\gg$ and in $\ll$ are positive, effectively computable and depend at most on $\xi$.

Since the totally complex case has been fully addressed in \cite{BajBu24}, 
we assume that $K$ is not totally complex. 
In particular, its unit group has rank $2$. 

\medskip

{\bf Case 1. } 
Assume first that the embeddings can be numbered in such a way 
that $\sigma_d (\bfx) = \bfx = P_\alpha (\xi)$. 
It is a complex non-real number. 
Without any loss of generality, we assume that 
$\sigma_{d-1} (\bfx)$ is its complex conjugate.  
The proof then goes as in \cite{BajBu24}, except for Subsubcase $(\star)$ below. 

\medskip

{\bf Subcase $d = 4$.} 
It follows from \cite[Lemma 5.1]{BajBu24} that there are nonzero $a_0, a_1, a_2$ in 
the Galois closure of $K$ such that 
$$
\sigma_1 (\bfx) + a_0 \sigma_2 (\bfx) + a_1 \sigma_3 (\bfx) + a_2 \sigma_4 (\bfx) = 0. 
$$
Upon dividing by $\sigma_1 (\bfx)$
and recalling that $\bfx = \mu u$, we get 
$$
|1 - \gamma v| \le  \Bigl| \frac{a_1 \sigma_3 (\bfx)}{\sigma_1 (\bfx)} \Bigr| + \Bigl| \frac{a_2 \sigma_4 (\bfx)}{\sigma_1 (\bfx)} \Bigr|, 
$$
with
$$ 
\gamma = - \frac{a_0 \sigma_{2} (\mu)}{\sigma_1 (\mu)}, \quad 
v = \frac{\sigma_{2} (u)}{\sigma_1 (u)}.  
$$
We distinguish two cases. 

\medskip

{\bf Subsubcase $(\star)$. } 
We assume that $1 - \gamma v = 0$, that is, 
$$
\sigma_1 (\bfx) + a_0 \sigma_2 (\bfx) = 0.
$$
Let $\eta_1, \eta_2$ be a fundamental system of units of $\Q(\xi)$. Then, 
there are a root of unity $\zeta$ in $\Q(\xi)$ and integers $b_1, b_2$ such that 
$$
\bfx = \mu u = \mu \zeta \eta_1^{b_1} \eta_2^{b_2},
$$
and we have
\beq   \label{rel}
\frac{\sigma_1 (\mu \zeta)}{- a_0 \sigma_2 (\mu \zeta)} \, 
\Bigl( \frac{\sigma_1 (\eta_1)}{\sigma_2 (\eta_1)} \Bigr)^{b_1} \, 
\Bigl( \frac{\sigma_1 (\eta_2)}{\sigma_2 (\eta_2)} \Bigr)^{b_2}  = 1.
\eeq

By Theorem \ref{MultRel}, this implies that there exist rational integers $n_0, n_1, n_2$, not all of which
are zero, such that 
$$
\Bigl( \frac{\sigma_1 (\mu \zeta)}{- a_0 \sigma_2 (\mu \zeta)} \Bigr)^{n_0} \, 
\Bigl( \frac{\sigma_1 (\eta_1)}{\sigma_2 (\eta_1)} \Bigr)^{n_1} \, 
\Bigl( \frac{\sigma_1 (\eta_2)}{\sigma_2 (\eta_2)} \Bigr)^{n_2}  = 1,
$$
with
$$
|n_0| \ll 1, \quad |n_1|, |n_2| \ll h_*(\mu). 
$$
If $\frac{\sigma_1 (\eta_1)}{\sigma_2 (\eta_1)}$ and $\frac{\sigma_1 (\eta_2)}{\sigma_2 (\eta_2)}$ are multiplicatively 
independent, then we derive from \eqref{rel} that 
$b_1 n_0 = n_1$, $b_2 n_0 = n_2$, and 
$$
\max\{ |b_1|, |b_2| \} \ll h_*(\mu),
$$
giving $h(u) \ll h_* (\mu)$ and $h(\bfx) \ll h_* (\mu)$, as wanted. 

Otherwise, there exist non-zero integers $c_1$, $c_2$, with $|c_1|, |c_2| \ll 1$ such that
$$
\Bigl( \frac{\sigma_1 (\eta_1)}{\sigma_2 (\eta_1)} \Bigr)^{c_1} \, 
\Bigl( \frac{\sigma_1 (\eta_2)}{\sigma_2 (\eta_2)} \Bigr)^{c_2}  = 1.
$$
Combined with \eqref{rel}, this gives
$$
\Bigl( \frac{\sigma_1 (\mu \zeta)}{- a_0 \sigma_2 (\mu \zeta)} \Bigr)^{c_1} \, 
\Bigl( \frac{\sigma_1 (\eta_2)}{\sigma_2 (\eta_2)} \Bigr)^{-c_2 b_1 + c_1 b_2}  = 1.
$$
We deduce that 
$$
\max\{|b_1|, |b_2|\} \ll h \Bigl( \frac{\sigma_1 (\mu \zeta)}{- a_0 \sigma_2 (\mu \zeta)} \Bigr) \ll h_* (\mu),
$$
and we conclude as above that $h(\bfx) \ll h_* (\mu)$. 

\medskip

{\bf Subsubcase $(\star \star)$. } 
We assume that $|1 - \gamma v| > 0$. We treat this case below, simultaneously with the case $d \ge 6$. 

\medskip

{\bf Case $d \ge 6$. } 
It follows from \cite[Lemma 5.1]{BajBu24} that there are nonzero $a_0, a_1, a_2, b_0, b_1, b_2$ in 
the Galois closure of $K$  
such that 
$$
\sigma_1 (\xi^j) + \sum_{i=0}^2 a_i \sigma_{d-i} (\xi^j) = 0 
\quad
\hbox{and}
\quad
\sigma_2 (\xi^j) + \sum_{i=0}^2 b_i \sigma_{d-i} (\xi^j) = 0, 
\quad
\hbox{for $j = 0, 1, 2$}.
$$
This implies 
$$
b_0 \sigma_1 (\xi^j) - a_0 \sigma_2 (\xi^j) + \sum_{i=1}^2 (a_i b_0 - a_0 b_i) \sigma_{d-i} (\xi^j)= 0, 
\quad
\hbox{for $j = 0, 1, 2$}.
$$
Consequently we have $a_1 b_0 \not= a_0 b_1$. Thus, $a_1 \sigma_{d-1} (\bfx) + a_0 \sigma_d(\bfx)$ 
and $b_1 \sigma_{d-1} (\bfx) + b_0 \sigma_d(\bfx)$ cannot be both $0$. 
By permuting $\sigma_1$ and $\sigma_2$ if necessary, 
we assume that $a_1 \sigma_{d-1} (\bfx) + a_0 \sigma_d(\bfx)$ is nonzero. 
By linearity, we have
$$
\sigma_1 (\bfx) + \sum_{i=0}^2 a_i \sigma_{d-i} (\bfx) = 0,
$$
thus $\sigma_1 (\bfx) + a_2 \sigma_{d-2} (\bfx)$ is nonzero. Consequently, by dividing by $\sigma_1 (\bfx)$
and recalling that $\bfx = \mu u$, we get 
$$
0 < |1 - \gamma v| \le \sum_{i=0}^1 \Bigl| \frac{a_i \sigma_{d-i} (\bfx)}{\sigma_1 (\bfx)} \Bigr|, 
\quad \hbox{with}
\quad 
\gamma = - \frac{a_2 \sigma_{d-2} (\mu)}{\sigma_1 (\mu)}, v = \frac{\sigma_{d-2} (u)}{\sigma_1 (u)}. 
$$
This corresponds exactly to the inequalities obtained in the case $d=4$. So, we treat both cases simultaneously.

Since $|\sigma_{d-1} (\bfx)| = |\sigma_d (\bfx) | < 1$, this gives 
$|1 - \gamma v| \ll |\sigma_1 (\bfx) |^{-1}$, thus 
$$
\log |1 - \gamma v| \le - h(\bfx) / 2, \quad \hbox{if $h(\bfx)$ is large enough}. 
$$
As, by Dirichlet's unit theorem, the unit $v$ can be expressed as a product of a root of unity in $K$ times 
integral powers of elements of a fundamental system of units in $K$, it 
follows from Theorem \ref{lflog} that 
\beq  \label{ineq}
\frac{h(\bfx)}{2} \le - \log |1 - \gamma v| \ll  h_* (\gamma) \log_* \frac{h_*(v)}{h_* (\gamma)}.
\eeq
Furthermore, by \eqref{xgemu},  we get    
$$
h(v) \ll h(u) \ll (h(\bfx) +  h_* (\mu) ) \ll h(\bfx),  
$$
and we derive from \eqref{ineq} that 
$$
h(\bfx) \ll h_* (\gamma) \ll h_* (\mu). 
$$
It then follows from Lemma \ref{thxmu} that there exist effectively computable, positive 
real numbers $c, \kappa$, depending only on $\xi$, such that 
$$
|\xi - \alpha| > c \, H(\alpha)^{- \frac{d}{2} + \kappa },
$$
for every algebraic number $\alpha$ which is a root of $x_0 + x_1 X + x_2 X^2$, as soon as we are in Case 1. 

\medskip

{\bf Case 2. } 
Assume now that the embeddings cannot be numbered in such a way 
that $\sigma_d (\bfx) = \bfx = P_\alpha (\xi)$. Then, there is a Galois conjugate $\xi_0$ of $\xi$ such that 
$$
|P_\alpha (\xi_0)| < |P_\alpha (\xi)| = |P_\alpha ({\overline \xi})|. 
$$
Consequently, we have 
$$
1 \le m =  |\Norm_{K / \Q} (P_{\alpha} (\xi))| \ll  |P_{\alpha} (\xi)|^3 \cdot H(\alpha)^{d - 3} 
\ll  |\xi - \alpha |^3 \cdot H(\alpha)^d, 
$$
giving that $|\xi - \alpha | \gg H(\alpha)^{-d/3}  \gg H(\alpha)^{-d/2 + 2/3 }$, since $d \ge 4$.

\medskip

{\bf Conclusion. } 
We have established in both cases that 
$$
|\xi - \alpha| \gg H(\alpha)^{- \frac{d}{2} + \min\{\kappa, 2/3\} },
$$
for every complex non-real quadratic algebraic number $\alpha$. 
This completes the proof of Theorem \ref{thquadr}.


\begin{thebibliography}{99}


\bibitem{BajBu24}
P. Bajpai and Y. Bugeaud, 
{\it Effective approximation to complex algebraic numbers
by algebraic numbers of bounded degree},
Trans. Amer. Math. Soc. 377 (2024), 5247--5269. 

\bibitem{Ba66}
A. Baker,
{\it Linear forms in the logarithms of algebraic numbers I--IV}, 
Mathematika 13 (1966), 204--216; 14 (1967), 102--107 and 220--224; 15 (1968), 204--216.

\bibitem{Ba73}
A. Baker,
{\it  A sharpening of the bounds for linear forms in logarithms~II},
{ Acta Arith.} {24} (1973), 33--36.


\bibitem{Bu18b}
Y. Bugeaud, 
Linear forms in logarithms and applications. 
IRMA Lectures in Mathematics and Theoretical Physics 28, 
European Mathematical Society, Z\"urich, 2018.


\bibitem{BuEv09}
Y. Bugeaud and J.-H. Evertse,  
{\it Approximation of complex algebraic numbers by algebraic numbers of bounded degree}, 
 Ann. Sc. Norm. Super. Pisa Cl. Sci. (5) 8 (2009), 333--368. 


\bibitem{EvGy15}
J. H. Evertse and K. Gy\H{o}ry,
Unit equations in Diophantine number theory.
Cambridge Studies in Advanced Mathematics 146, Cambridge University Press, 2015. 


\bibitem{Fe71}
{N. I. ~Fel'dman},
{\it An effective refinement of the exponent in Liouville's theorem}, 
Iz. Akad. Nauk SSSR, Ser. Mat. 35 (1971), 973--990 (in Russian);
English translation in Math. USSR. Izv. 5 (1971) 985--1002. 


\bibitem{Gu67}
R. G\"uting,
{\it Polynomials with multiple zeros},
Mathematika 14 (1967), 149--159.


\bibitem{LovdP83}
J. H. Loxton and A. J. van der Poorten, 
{\it Multiplicative dependence in number fields}, 
Acta Arith. 42 (1983), 291--302. 


\bibitem{WaLiv}
M. Waldschmidt,
Diophantine Approximation on Linear Algebraic Groups. 
Transcendence Properties of the Exponential Function in Several Variables, 
Grundlehren Math. Wiss. 326, Springer, Berlin, 2000.




\end{thebibliography}
\end{document}